\documentclass[12pt]{article}
\usepackage{amsmath, amssymb, amsthm, enumerate} 

\newtheorem{theorem}{Theorem}[section]

\newtheorem{lemma}[theorem]{Lemma}

\newtheorem{problem}[theorem]{Problem}

\def\rr{{\mathbb R}}

\def\nn{{\cal N}}
\def\ii{{\cal I}}

\def\su{\subset}
\def\sp{\supset}
\def\se{\setminus}

\def\de{\delta}
\def\ep{\varepsilon}
\def\si{\sigma}

\def\cd{\cdot}
\def\stb{,\ldots ,}
\def\emp{\emptyset}

\def\ol{\overline}

\def\cl{{\rm cl}\, }
\def\inter{{\rm int}\, }

\def\proof{{\bf Proof.} }

\begin{document}
\title{Translating measurable sets}
\author{M. Laczkovich}

\footnotetext[1]{{\bf Keywords:} translates of measurable sets, strongly
meager sets}
\footnotetext[2]{{\bf MSC 2000 subject classification:} 28A05, 28A99}

\maketitle

\begin{abstract}
We prove that if $A,B$ are compact subsets of $\rr$ such that the upper density
of $B$ is positive at every point of $B$, then there is a closed null set
$N\su A$ such that $N+B=A+B$. As a corollary we find that if $A,B\su \rr$ are
measurable, and every null subset $N$ of $A$ can be translated into $B$ (that
is, if $B$ contains a suitable translate of $N$), then there is a null set $N_0$
such that $A\se N_0$ can be translated into $B$. The topic is related to some
consistency results of the theory of additive properties of the reals.
\end{abstract}

\section{Introduction and main results}
We say that the set {\it $A\su \rr$ can be translated into the set $B$} if
there exists a real number $x$ such that $A+x=\{a+x\colon a\in A\} \su B$.
The Lebesgue measure of the set $A\su \rr$ is denoted by $|A|$. A set
$N\su \rr$ is {\it null}, if $|N|=0$.

We start with the following question. Suppose $A$ and $B$ are measurable subsets
of $\rr$ such that every null subset of $A$ can be translated into $B$.
Does this condition imply that $A$ can be translated into $B$? The answer is
negative: if $N$ is a singleton (or more generally a nonempty and countable set)
and $B=\rr \se N$, then every null set can be translated into $B$, but $\rr$
cannot be translated into $B$. We prove, however, the following.
\begin{theorem}\label{t1}
If $A,B\su \rr$ are measurable, and every null subset of $A$ can be translated
into $B$, then there is a null set $N\su A$ such that $A\se N$ can be translated
into $B$.
\end{theorem}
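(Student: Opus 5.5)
The plan is to argue by contradiction and reduce to the compact result announced in the abstract: if $A,B$ are compact and the upper density of $B$ is positive at every point of $B$, then some closed null $N\su A$ satisfies $N+B=A+B$.

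First I would reformulate the goal. For a real $x$, the set $N_x=A\se (B-x)$ is exactly the part of $A$ that fails to land in $B$ after translating by $x$. So the statement is equivalent to finding an $x$ with $|A\se (B-x)|=0$. Write $C=\rr\se B$. Suppose, for contradiction, that no such $x$ exists. Then
\[
f(x)=|(A+x)\cap C|>0 \qquad\text{for every } x\in\rr .
\]
I will then build a null set $N\su A$ with $C-N=\rr$. This gives the contradiction: for every $x$ we would have $x=c-n$ with $c\in C$ and $n\in N$, hence $n+x\in C$, so $N+x\not\su B$. That contradicts the hypothesis that every null subset of $A$ can be translated into $B$.

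To construct $N$, fix $x$ with $f(x)>0$. By inner regularity I pick compact sets $A_x\su A$ and $K_x\su C$ with $|(A_x+x)\cap K_x|>0$, where $K_x$ additionally has positive upper density at each of its points. The function $y\mapsto |(A_x+y)\cap K_x|$ is a convolution of indicator functions, hence continuous. Therefore $|(A_x+y)\cap K_x|>0$ holds on an open neighbourhood $U_x$ of $x$, and in particular $U_x\su K_x-A_x$. By Lindel\"of, countably many of these, say $U_i\su K_i-A_i$ for $i=1,2,\ldots$, cover $\rr$.

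Now apply the compact theorem to the pair $A_i$, $-K_i$. Since $-K_i$ inherits the density property, this gives a closed null $N_i\su A_i$ with $N_i-K_i=A_i-K_i$, and hence $K_i-N_i=K_i-A_i\sp U_i$. Then $N=\bigcup_i N_i$ is a null subset of $A$ with $C-N\sp\bigcup_i U_i=\rr$, which is the required contradiction.

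The main obstacle I expect is the auxiliary step: every measurable set $E$ of positive measure (here $E=C\cap(A_x+x)$, so that $|(A_x+x)\cap K_x|>0$ is automatic) contains a compact $K$ of positive measure such that the upper density of $K$ itself is positive at every point of $K$. Simply taking a compact set of density points of $E$ does not work, because density with respect to $E$ need not pass to the subset.

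I would construct $K$ Cantor-style as $K=\bigcap_n F_n$. Here $F_n$ is a finite union of closed intervals, each $J$ of which satisfies $|J\cap E|>(1-\ep_n)|J|$; such intervals exist by the Lebesgue density theorem. At each stage I keep inside every interval of $F_n$ subintervals of $F_{n+1}$ filling a fixed proportion, say more than half, of its length, with $\sum\ep_n$ small. Then every point of $K$ lies in intervals of arbitrarily small length in which $K$ has relative measure bounded below, which gives positive upper density. I also keep $|K|>0$ and $K\su E$ up to a null set, and discard that null set by a final inner-regularity adjustment, if necessary repeating the construction inside the resulting set so that the density property is preserved.
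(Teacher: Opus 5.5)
Your proof is correct, and it reaches Theorem~\ref{t1} by a shorter route than the paper.

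The paper goes through the stronger Theorem~\ref{t2}. It enlarges $B$ to a $d$-closed $G_\de$ set $B_2$ with $|B_2\se B|=0$, uses Lemma~\ref{l1} to rewrite non-translatability as $A+(-B_2^c)=\rr$, and then invokes Theorem~\ref{t3}. Since Theorem~\ref{t3} must recover \emph{all} of $A+B$, its proof needs the Luzin--Menchoff type Lemma~\ref{l3}, to enlarge every compact piece of the $F_\si$ set to a compact set with positive upper density at each point.

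You instead negate the conclusion of Theorem~\ref{t1} itself, which gives the measure-theoretic condition $|(A+x)\cap B^c|>0$ for all $x$. Positivity of $|(A_x+y)\cap K_x|$ is an open condition in $y$. So you only have to cover $\rr$ by countably many open sets $U_i\su K_i-A_i$, and for that it suffices to find \emph{some} compact $K_i\su B^c$ of positive measure with the density property. That is exactly the paper's Lemma~\ref{l2} applied to $E=B^c\cap(A_x+x)$ with $\ep<|E|$.

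Thus you need only Lemma~\ref{l2}, Theorem~\ref{t4}, continuity of $h\mapsto|E\cap(F+h)|$, and Lindel\"of. You avoid the $d$-closure/$G_\de$ regularization, Lemma~\ref{l3} and Theorem~\ref{t3}. The price is that you obtain only Theorem~\ref{t1}. The paper's route also delivers Theorem~\ref{t2} (full translatability for $d$-closed $G_\de$ sets) and Theorem~\ref{t3} (the exact identity $N+B=A+B$).

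If you prove the auxiliary lemma yourself rather than citing Lemma~\ref{l2}, two points need care. First, run the construction inside a fixed compact $K_0\su E$ with $|K_0|>0$. The limit set then lies in $K_0$, because the intervals shrink and each meets $K_0$, so no after-the-fact ``discarding'' is needed. Second, make sure the relative measure of the \emph{final} set in each interval is bounded below, by controlling the losses over all later stages, as the paper does with $|I_n^j\cap P_k\cap K|>\tfrac{j}{j+1}|I_n^j|$ for all $j<k$. Retaining just over half of each parent at each single step would let the relative measure tend to $0$.
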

If $B$ is closed, then the statement of Theorem \ref{t1} is obvious; moreover,
we can conclude that $A$ can be translated into $B$. Indeed, let $C$ be a
countable subset of $A$ such that the closure of $C$ equals $A$. Clearly, if
$C$ can be translated into $B$ and $C+x\su B$, then $A+x\su B$. We show that
the same conclusion holds if $B$ is a $G_\de$ set which is closed in the density
topology.

We denote by $d(A,x)$ and by $\ol d (A,x)$ the density and the upper density
of the set $A\su \rr$ at the point $x$. A set $G\su \rr$ is said to be {\it
$d$-open} if $G$ is Lebesgue measurable and $d(A,x)=1$ for every $x\in A$.
The {\it density topology} is defined as the collection of all $d$-open sets.
It is well known that the density topology is a completely regular Hausdorff
topology on $\rr$ (see \cite[pp. 148-152]{LMZ}). A set $B\su \rr$ is {\it
$d$-closed} if $\rr \se B$ is $d$-open; that is, if $B$ contains every
point $x$ such that $\ol d (B,x)>0$. 
\begin{theorem}\label{t2}
Let $A\su \rr$ be Lebesgue measurable, and suppose that $B\su \rr$ is $d$-closed
and is $G_\de$ in the Euclidean topology. If every null subset of $A$ can be
translated into $B$, then $A$ can be translated into $B$.
\end{theorem}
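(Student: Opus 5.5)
The plan is to derive Theorem \ref{t2} from the compact result announced in the abstract: if $K,P$ are compact and $\ol d(P,y)>0$ for every $y\in P$, then there is a closed null set $N\su K$ with $N+P=K+P$. I will refer to this as the main theorem. Put $G=\rr\se B$. By hypothesis $G$ is $d$-open and, since $B$ is $G_\de$, $G$ is $F_\si$. Note that $A+x\su B$ holds exactly when $x\notin A-G$. So it is enough to find a null set $N\su A$ with $N-G=A-G$. The hypothesis gives some $x$ with $N+x\su B$, so $x\notin N-G=A-G$, and then $A+x\su B$.

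First I reduce to compact pieces on both sides. Write $A=Z\cup\bigcup_m K_m$ with $|Z|=0$ and each $K_m$ compact. Put $Z$ into $N$; this is harmless, since $Z-G\su N-G$ automatically. Next I want to write
$$G=\bigcup_j P_j,$$
where each $P_j$ is compact and $\ol d(P_j,y)>0$ for every $y\in P_j$. For each pair $(m,j)$, apply the main theorem to the compact sets $K_m$ and $-P_j$. Reflection preserves upper density, so $-P_j$ satisfies the hypothesis. This gives a closed null $N_{m,j}\su K_m$ with $N_{m,j}-P_j=K_m-P_j$. Let $N=Z\cup\bigcup_{m,j}N_{m,j}$. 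This set is null, lies in $A$, and
$$A-G=(Z-G)\cup\bigcup_{m,j}(K_m-P_j)\su N-G.$$
The reverse inclusion is trivial, which finishes the proof.

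The main obstacle is the decomposition of $G$. We know $G=\bigcup_n F_n$ with $F_n$ closed, and we may assume each $F_n$ is bounded. We also know $d(G,x)=1$ at every $x\in G$. However, a point of $F_n$ may have zero upper density with respect to $F_n$ itself, and a countable union can have density $1$ at $x$ even when every piece has density $0$ there. The first thing I would try is a Baire-category argument combined with the continuity of $x\mapsto|F_n\cap[x-h,x+h]|$. For rational $k$, consider the sets
$$H_{n,k}=\{x\in F_n\colon |F_n\cap[x-h,x+h]|\ge 2h/k \text{ for all } h\le 1/k\}.$$
Each $H_{n,k}$ is closed, and together they cover the points where some $F_n$ has positive lower density. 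The difficulty is twofold: the density estimate is relative to $F_n$ rather than to $H_{n,k}$, and points of $G$ at which every $F_n$ has density $0$ are not covered at all.

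I see two ways to handle this. One is to check whether the proof of the main theorem really needs density of $P$ itself, or only of a closed superset contained in $G$. If the latter suffices, I would apply it with $F_n$ in place of $P_j$ on the covered points. The other is to show that the uncovered points form a set $E$ that can be absorbed by enlarging $N$ by a countable set. This second route would use the fact that $G$ is $d$-open: around each $x\in E$, $G$ has full density, so $G$ can be refined to closed pieces adapted to $x$. I expect this step, rather than the bookkeeping above, to carry the real content of the proof.
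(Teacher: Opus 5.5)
Your reduction is correct and follows the paper's route. Lemma 1 turns Theorem 2 into finding a null $N\su A$ with $N-G=A-G$ for the $d$-open $F_\si$ set $G=\rr\se B$, which is Theorem 3 with $-G$ in place of $B$. Your splitting of $A$ and $G$ into compact pieces, with Theorem 4 applied to each pair, is exactly the paper's proof of Theorem 3. There is a minor sign slip: $A+x\su B$ iff $x\notin G-A$, not $x\notin A-G$. Since $N-G=A-G$ iff $G-N=G-A$, nothing changes.

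However, there is a genuine gap at the step you yourself flag. You never prove that $G=\bigcup_j P_j$ with $P_j$ compact and $\ol d(P_j,y)>0$ for all $y\in P_j$. That decomposition is the only nontrivial ingredient beyond Theorem 4, and neither of your routes supplies it.
\begin{itemize}
\item The first route cannot work as stated. In the proof of Theorem 4 the second summands must range over one fixed compact set $P$. This is needed for the limit argument on nested intersections, and for the final contradiction: there you need two representations $c_n=(u_i+a)+(b+h-u_i)$ with $b+h-u_i$ in the same $P$ for which $c_n\notin N_n+P$. Knowing only $\ol d(G,b)>0$ gives $b+h-u_i\in G$, which contradicts nothing.
\item The second route has no mechanism behind it. Adjoining a countable $C\su A$ to $N$ adds only $C-G$, and nothing you say forces this to cover $A-E$ for the null but possibly uncountable exceptional set $E$.
\item The sets $H_{n,k}$ do not even settle the covered points since, as you note, they carry density of $F_n$ rather than of themselves.
\end{itemize}

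The missing idea is to \emph{enlarge} each closed piece inside $G$, rather than to cover $G$ by good points. This is the paper's Lemma 3, a Luzin--Menchoff type statement. If $F\su G$ is compact and $\ol d(G,x)>0$ for every $x\in F$ (automatic here, since $G$ is $d$-open), then there is a compact $Q$ with $F\su Q\su G$ and $\ol d(Q,x)>0$ for every $x\in Q$. The construction goes as follows.
\begin{itemize}
\item Subdivide each component $(a_n,b_n)$ of $[\min F,\max F]\se F$ into intervals $[x_k^n,x_{k+1}^n]$, with $k$ ranging over the integers. These intervals accumulate at $a_n$ and $b_n$, and their lengths are small compared with their distance to the endpoints.
\item In each such interval choose, by Lemma 2 (a nested Vitali-cover construction), a compact $P_k^n\su G\cap[x_k^n,x_{k+1}^n]$ with $|P_k^n|\ge|G\cap[x_k^n,x_{k+1}^n]|/2$ and $\ol d(P_k^n,x)\ge 1/2$ at each $x\in P_k^n$.
\item Then $Q=F\cup\bigcup_{n,k}P_k^n$ is compact and has upper density at least $1/2$ at points of the $P_k^n$.
\item At points of $F$, $Q$ keeps positive upper density because near $F$ it captures a fixed fraction of the measure of $G$. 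A small adjustment at $\min F$ and $\max F$ secures the one-sided densities there.
\end{itemize}
Applying this to each $F_n$ in $G=\bigcup_n F_n$ gives the decomposition $G=\bigcup_n Q_n$ you need, and your argument then goes through.
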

Theorem \ref{t2} implies Theorem \ref{t1}. Indeed, suppose $A,B$ are measurable,
and every null subset of $A$ can be translated into $B$. Let $B_1 =B\cup
\{ x\in \rr \colon \ol d (B,x)>0 \}$. Then $B_1$ is $d$-closed, and $B_1 \se B$
is null by the Lebesgue density theorem. Let $B_2$ be a $G_\de$ set such that
$B_1 \su B_2$ and $B_2 \se B_1$ is null. Then $B_2$ is also $d$-closed, and
$B_2  \se B$ is a null set. Since every null subset of $A$ can be translated
into $B_2$, $A$ can be translated into $B_2$ by Theorem \ref{t2}. If $A+x\su
B_2$, then $(A\se N)+ x\su B$, where $N=(B_2 \se B)-x$ is a null set,
proving Theorem \ref{t1}.

The condition that $A$ can be translated into $B$ can be formulated in terms
of sums of sets as follows. If $B\su \rr$, then we denote $-B=\{ -x\colon
x\in B\}$ and $B^c =\rr \se B$. The set $\{ a+b\colon a\in A, \ b\in B\}$ is
denoted by $A+B$.
\begin{lemma}\label{l1}
For every $A,B\su \rr$, the following are equivalent.
\begin{enumerate}[{\rm (i)}]
\item $A$ cannot be translated into $B$.
\item $A+(-B^c )=\rr$.
\end{enumerate}  
\end{lemma}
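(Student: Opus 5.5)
We prove the equivalence by proving the equivalence of the negations: $A$ can be translated into $B$ if and only if $A+(-B^c)\ne \rr$. Everything comes from unwinding the definitions, so the argument is a short chain of equivalences.

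Fix $x\in\rr$. We claim that $A+x\su B$ holds exactly when $x\notin A+(-B^c)$.

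Suppose first that $A+x\not\su B$. Then there is $a\in A$ with $a+x\in B^c$. Put $c=a+x\in B^c$. Then $x=c-a$, so $-x=a+(-c)\in A+(-B^c)$.

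Conversely, suppose $-x\in A+(-B^c)$, say $-x=a-c$ with $a\in A$ and $c\in B^c$. Then $a+x=c\notin B$, so $A+x\not\su B$.

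This proves the claim in the form: $A+x\su B$ iff $-x\notin A+(-B^c)$. Consequently $A$ can be translated into $B$ iff some real $y=-x$ lies outside $A+(-B^c)$, that is, iff $A+(-B^c)\ne\rr$. Negating both sides gives (i)$\iff$(ii).

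There is no real obstacle here. The only point needing care is the sign bookkeeping: a translate $A+x$ corresponds to the point $-x$ (not $x$) of the sum set. This does not affect the final statement, because $x\mapsto -x$ is a bijection of $\rr$.

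Degenerate cases are covered automatically. If $A=\emp$, then $A+(-B^c)=\emp\ne\rr$, and $A$ is trivially translatable into $B$. If $B^c=\emp$, then again the sum set is empty, and every $A$ can be translated into $B=\rr$.
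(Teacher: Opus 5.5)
Your proof is correct and is essentially the paper's argument: both just unwind the definitions. The paper rewrites (i) as $(-A)+B^c=\rr$ and then reflects, while you track the point $-x\in A+(-B^c)$ directly; your sign bookkeeping is right.
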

\proof 
The set $A$ cannot be translated into $B$ if and only if for every $x\in \rr$
there is an $a\in A$ such that $x+a\notin B$; that is, $x+a\in B^c$. Thus
(i)$\iff x\in (-A)+ B^c$ for every $x\in \rr \iff (-A)+B^c =\rr \iff A+(-B^c )
=\rr$. \hfill $\square$

According to the lemma above, the statement of Theorem \ref{t2} is equivalent
to the following. {\it Suppose that $A$ is measurable, and $B$ is $d$-open and
$F_\si$. If $A+B=\rr$, then there is a null set $N\su A$ such that $N+B=\rr$.}
We will prove somewhat more.
\begin{theorem}\label{t3}
Suppose that $A$ is measurable, $B$ is $F_\si$, and $\ol d (B,x)>0$ for every
$x\in B$. Then there is a null set $N\su A$ such that $N+B=A+B$.
\end{theorem}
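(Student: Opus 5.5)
Reduce first to compact sets, then prove the compact case by an inductive Cantor-type construction.

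Write $A=Z\cup\bigcup_n A_n$ with $A_n$ compact and $|Z|=0$; put $Z$ into $N$. Write $B=\bigcup_m B_m$ with $B_m$ closed, and then intersect with bounded closed intervals so that each $B_m$ is compact.

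The hypothesis $\ol d(B,x)>0$ is about $B$, not about the pieces $B_m$. I would like to replace $B$ by compact pieces $K$ with $\ol d(K,x)>0$ at every $x\in K$. Take $K$ to be the $d$-closure core, i.e.\ the support of Lebesgue measure restricted to $B_m$. This costs only a null set of $B_m$, and that null set must be handled separately. For points $b$ of this leftover, with $\ol d(B,b)>0$, I would exploit the positive upper density of $B$ itself. Concretely, I would prove a compact lemma directly in the general form: if $A$ is compact, $B$ is $F_\sigma$ and $\ol d(B,x)>0$ on $B$, then for each compact $B'\su B$ there is a closed null $N\su A$ with $N+B\supseteq A+B'$. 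Then $N=Z\cup\bigcup_{n,m}N_{n,m}$ works, since $A+B=(Z+B)\cup\bigcup_{n,m}(A_n+B_m)$.

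For the core lemma, build $N=\bigcap_k E_k$, where $E_k$ is a finite union of small closed pieces of $A$ with total length tending to $0$. Fix $y=a+b$ with $a\in A$ and $b\in B'$. At stage $k$, cover $A$ by intervals $I$ of length $\de_k$ and keep in $E_k$ only a sparse set of points of $A\cap I$, say $\ep_k$-dense but of small measure. The requirement is that for every $y\in A+B'$ and every retained interval there is a retained point $a'$ with $y-a'\in B$. The positive upper density of $B$ at $b$ means that at some small scale $r$, a positive proportion $c$ of $[b-r,b+r]$ lies in $B$. So if $a'$ ranges over a set that is $\eta r$-dense near $a$ and has enough independent choices, some $y-a'$ lands in $B$ outside a small exceptional set.

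Making the choices uniform in $y$ is the main obstacle, because both the scales $r$ and the constants $c$ vary with $b$. I would first stratify $B'$ into countably many pieces $\{b:\ \ol d(B,b)>1/j\}$, which are Borel, using the semicontinuity properties of the density. On each piece I would use a compactness and Baire-category argument to obtain a portion where the scales are controlled, and recurse on the remainder. The goal at each stage is to guarantee only that the set of $y$ not yet represented by $E_k$ shrinks to a set whose points are handled at later stages. A nested-intervals argument then yields $y\in N+B$. The estimate that the measure of $E_k$ can go to zero while every $y$ keeps a witness is the step I expect to be most delicate. There I would try a randomized selection of points of $A$ in each interval combined with a counting/measure estimate, ensuring through the fixed density constant on each stratum that every $y$ keeps at least one witness at every stage.
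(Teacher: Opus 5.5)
Your reduction ($N=Z\cup\bigcup_{n,m}N_{n,m}$ with $N_{n,m}+B\supseteq A_n+B_m$) is correct and matches the paper's. The argument breaks down after that, in two places.

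\textbf{The passage from $B_m$ to sets with the density property fails.} The support of Lebesgue measure restricted to $B_m$ need not have positive upper density at its points. For example, $E=\{0\}\cup\bigcup_{n\ge 1}[2^{-n},2^{-n}+4^{-n}]$ is its own support, yet $\ol d(E,0)=0$.

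What you need is the Luzin--Menchoff type Lemma \ref{l3}: if $P\su B$ is compact and $\ol d(B,x)>0$ for $x\in P$, then there is a compact $Q$ with $P\su Q\su B$ and $\ol d(Q,x)>0$ for every $x\in Q$. The paper builds $Q$ by placing in the gaps of $P$ compact pieces of $B$ that have upper density $\ge 1/2$ at each of their points (Lemma \ref{l2}). This removes your ``leftover null set'' entirely. It also reduces your general compact lemma to the case where the $B$-side is itself compact with the density property.

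That compactness matters. In any limiting argument, witnesses $y-a'_k\in B$ with $a'_k\to a'$ only give $y-a'\in\cl B$. So with $B$ merely $F_\si$, your nested-intervals step does not yield $y\in N+B$.

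\textbf{The compact case itself is not proved.} The Cantor-type construction stays heuristic exactly at the step you flag, and it fails there for three reasons:
\begin{enumerate}[$\bullet$]
\item A randomized choice of points plus a measure estimate gives witnesses only for $y$ outside some exceptional set. The theorem requires every $y\in A+B'$.
\item An $\eta r$-dense finite set of $a'$ near $a$ makes the points $y-a'$ $\eta r$-dense near $b$. But a set filling a proportion $c$ of $[b-r,b+r]$ can avoid all of them.
\item ``Recurse on the remainder'' gives no mechanism that keeps the witnesses of a fixed $y$ coherent through all stages.
\end{enumerate}

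\textbf{The missing idea.} The paper avoids any construction (Theorem \ref{t4}). For compact $A,B$, the family of closed $N\su A$ with $N+B=A+B$ is closed under decreasing countable intersections, by compactness. Strictly decreasing transfinite sequences of closed sets are countable. Hence there is a minimal such $N$.

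Suppose $|N|>0$ and take a density point $a$ of $N$. Minimality yields points $c_n=a_n+b_n\in N+B$ all of whose representations have $N$-part in $(a-1/n,a+1/n)$, and one may assume $b_n\to b\in B$. Since $\ol d(B,b)>0$ and $a$ is a density point of $N$, $m=|(N-a)\cap(b-B)|>0$. By continuity, $|(N-a)\cap(b+h-B)|>m/2$ for small $|h|$. Taking $h=c_n-a-b$ gives two representations of $c_n$ whose $N$-parts are more than $m/4>2/n$ apart, a contradiction.

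The density hypothesis is used only at the single point $b$. So the uniformity problems (varying scales and constants) that drove your stratification never arise.
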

We can say more if $A$ and $B$ are compact.
\begin{theorem}\label{t4}
Let $A,B$ be compact subsets of $\rr$ such that $\ol d (B,x)>0$ for every
$x\in B$. Then there is a closed null set $N\su A$ such that $N+B=A+B$.
\end{theorem}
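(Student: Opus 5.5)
The plan is a nested-compact-sets argument combined with a one-step ``thinning'' lemma.

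First, the reduction. For $y\in\rr$ and a set $K$, we have $y\in K+B$ if and only if $K\cap (y-B)\neq\emp$. Suppose $A=K_0\sp K_1\sp K_2\sp\cdots$ are compact, $|K_n|<1/n$, and $K_n+B\sp A+B$ for every $n$. Put $N=\bigcap_n K_n$; it is a closed null subset of $A$. For each $y\in A+B$ the sets $K_n\cap(y-B)$ are nonempty, compact (since $B$ is compact) and decreasing. Their intersection $N\cap(y-B)$ is therefore nonempty, so $y\in N+B$. Hence $N+B=A+B$. Everything reduces to the following one-step lemma.

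\emph{Thinning lemma.} Let $K\su A$ be compact and $\ep>0$. Then there is a compact $K'\su K$ with $|K'|<\ep$ and $K'+B\sp K+B$.

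To prove the lemma I would split $K$ according to how its points behave under the Lebesgue density theorem, with uniformity obtained by an Egorov-type argument. Fix $\de>0$ and let $B_j=\{b\in B\colon |B\cap(b-r,b+r)|\ge 2r/j \hbox{ for arbitrarily small } r\}$. The hypothesis $\ol d(B,b)>0$ gives $B=\bigcup_j B_j$. Choose a compact $H\su K$ and $r_0>0$ such that $|K\se H|$ is small and $|K\cap(a-r,a+r)|\ge(1-\de)2r$ for all $a\in H$ and all $r<r_0$. Take $\de$ much smaller than $1/j$; the different $j$ have to be handled by a diagonal or countable-union argument, which I expect to be an additional technicality.

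Now let $y=a+b$ with $a\in H$, $b\in B_j$, and let $r<r_0$ be a good radius for $b$. Then $(y-B)\cap K\cap(a-r,a+r)$ has measure at least $r/j$, a fixed proportion of the interval. So near points of $H$ it suffices for $K'$ to hit every ``fat'' piece of $y-B$. Points $a$ of $K\se H$ must be kept essentially intact. I would take the compact set $\cl(K\se U)$, where $U\sp H$ is relatively open with $|K\cap U\se H|$ small, so this part has small measure automatically.

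The main obstacle is the selection step. We need a compact $S\su K$ of measure $<\ep/2$ that meets every set $(y-B)\cap K\cap(a-r,a+r)$ arising above, uniformly over all $y\in H+B$. My first attempt is a probabilistic or averaging construction. Cover $H$ by intervals of length $r$ at a scale $r<r_0$ chosen so that a finite $\eta$-net of $B$ consists of good radii. In each interval keep a union of tiny subintervals forming a fraction $\eta\ll 1/j$, randomly translated. Fubini then shows that for each fixed $y$ the fat piece is hit with positive proportion of translations.

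The difficulty is passing from ``each $y$'' to ``all $y$'' simultaneously, because hitting is not an open condition. I would handle it by using the compactness of $B$ to reduce to finitely many $y$ up to an error of size $\eta r$. Then I would enlarge the kept subintervals by $\eta r$ so that hitting becomes stable under such perturbations, while checking that the total measure stays $O(\eta)|K|<\ep/2$. If this balance between the net size and the enlargement fails, the fallback is to iterate over scales. Then $K'$ need only hit $y-B$ up to an error that shrinks at each stage, and exact hitting is recovered in the limit by the nested-compactness argument above.
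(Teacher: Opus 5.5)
Your reduction to the thinning lemma is correct. It is the same compactness argument the paper uses to show that the family $\nn$ of closed $N\su A$ with $N+B=A+B$ is closed under countable decreasing intersections. But it moves all of the difficulty into the lemma, which, for every $\ep>0$, is equivalent to the theorem itself. For the lemma you give a plan rather than a proof, and its central step is left open. The sketch has three concrete holes.
\begin{enumerate}[(i)]
\item The buffer $(K\cap U)\se H$ is discarded. It is not contained in $\cl(K\se U)$, and $S$ is designed only for $y\in H+B$. So a sum $y=a+b$ whose representations all use points $a$ of the buffer, where you have no control of the density of $K$, is never shown to lie in $K'+B$.
\item The parameters are circular. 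The radius $r_0$ depends on $\de$, you need $\de\ll 1/j$, and the good radii of $b$ must lie below $r_0$. A compact $B$ satisfying the hypothesis can have $\inf_{b\in B}\ol d(B,b)=0$, and your sets $B_j$ need not be closed. So the ``diagonal or countable-union argument'' is a genuine step: the kept pieces for different $j$ must still combine into one compact set of small measure.
\item Most seriously, the selection step fails as sketched. A positive proportion of good translations for each of finitely many net points $y_i$ does not give one translation good for all of them unless those proportions are close to $1$. Fubini only guarantees a proportion of order $1/j$. Enlarging the kept subintervals to make hitting stable is incompatible with $K'\su K$: from $s\in K\cap(y_i-B)$ you only get $s+(y-y_i)\in y-B$, which need not lie in $K$. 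The multiscale fallback produces sets that meet $y-B$ only approximately, so the density argument is no longer available at the next stage.
\end{enumerate}

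The missing idea is that no uniform selection is needed at all. The paper takes a \emph{minimal} element $N$ of $\nn$. It exists because $\nn$ is closed under countable decreasing intersections and every strictly decreasing transfinite sequence of closed sets is countable. The paper then proves $|N|=0$ by contradiction at a single density point $a$ of $N$. If no $N\se(a-1/n,a+1/n)$ belongs to $\nn$, pick $c_n=a_n+b_n$ all of whose representations use a point of $N$ within $1/n$ of $a$, and pass to a subsequence with $b_n\to b\in B$. The hypothesis $\ol d(B,b)>0$ gives $|(N-a)\cap(b-B)|=m>0$. Continuity of $h\mapsto|E\cap(F+h)|$ then gives $|(N-a)\cap(b+h-B)|>m/2$ for all $|h|<\de$. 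So for large $n$ the point $c_n$ has two representations whose $N$-coordinates are more than $m/4$ apart, a contradiction. Continuity of translation in measure is what replaces your passage from ``each $y$'' to ``all $y$''. Arguing at one pair $(a,b)$ removes any need for uniformity over $a\in H$ or over $b\in B$.
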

Theorems \ref{t3} and \ref{t4} will be proved in the next section.

We conclude with some remarks and problems.

{\it Remarks on Theorem \ref{t1}.} We may ask if the following, stronger
statement is true.

(T): {\it If $A,B\su \rr$ are measurable, and every null subset of $A$
can be translated into $B$, then there is a countable set $C\su A$ such that
$A\se C$ can be translated into $B$.}

The answer is negative, at least consistently. A set $N$ is called {\it strongly
meager} if every null set can be translated away from $N$. Therefore, if $N$ is
an uncountable strongly meager set, then every null set can be translated into
$\rr \se N$, but $\rr \se C$ cannot be translated into $\rr \se N$ for any
countable set $C$. It is consistent with ZFC that there are strongly
meager sets of cardinality continuum (see \cite[Corollary 8.20, p. 304]{Bu}),
and thus (T) is consistently false.

On the other hand, it is also consistent with ZFC that every strongly
meager set is countable (see \cite[Metatheorem 9.2, p. 385]{Bu}). Thus (T)
is consistently true in the case $A=\rr$. This motivates the following
questions.

\begin{problem}
Is {\rm (T)} consistent with {\rm ZFC}?
\end{problem}

\begin{problem}
Suppose that $A,B\su \rr$ are Borel. Is it true that if every null subset
of $A$ can be translated into $B$, then there is a countable set $C\su A$
such that $A\se C$ can be translated into $B$?
\end{problem}

{\it Remarks on Theorem \ref{t3}.} It is clear that some condition on $B$ is
necessary: if $|A|>0$ and $B$ is countably infinite, then the conclusion of
the theorem is false. More generally, suppose that $B$ is nonempty and has the
property that $N+B$ is null for every null set $N$. It is obvious that if
$|A|>0$, then $N+B\ne A+B$ for every  null set $N\su A$. It
is known that under Martin's axiom there is such a set $B$ of cardinality
continuum \cite{FT}. See also \cite[Theorem 12]{EKM}, where such a set is
constructed under (CH). By \cite[Theorem 13]{EKM}, it is also consistent
with ZFC that there is a coanalytic set $B$ with the properties above.

\begin{problem}
Let {\rm (S)} denote the following statement: if $A,B\su \rr$ are measurable
and $B$ is uncountable, then there is a null set $N\su A$ such that
$N+B=A+B$. Is {\rm (S)} consistent with {\rm ZFC}?
\end{problem}

Note that (S) implies (T). Indeed, suppose that $A,B$ are measurable, and
every null subset of $A$ can be translated into $B$. If $B^c$ is countable,
then $A\se B^c$ can be translated into $B$; in fact, $A\se B^c \su B$. Suppose
that $B^c$ is uncountable. If $N$ can be translated into $B$, then
$N+(-B^c )\ne \rr$ by Lemma \ref{l1}. If this is true for every null set
$N\su A$, then $A+(-B^c)\ne \rr$ by (S), and thus $A$ can be translated into
$B$. Therefore, if (S) is consistent with ZFC then so is (T).

{\it Remarks on Theorem \ref{t4}.} 
The Erd\H os-Kunen-Mauldin theorem states that {\it if $B\su \rr$ is a nonempty
perfect set, then there is a perfect null set $N\su \rr$ such that $N+B=\rr$}.
(See \cite[Theorem 1]{EKM} and also \cite{E}, where a stronger statement is
proved.) A common generalization of the Erd\H os-Kunen-Mauldin theorem and
Theorem \ref{t4} would be the following: if $A,B$ are nonempty perfect sets,
then there is a {\it closed} null set $N\su A$ such that $N+B=A+B$.
Unfortunately, this is false, as we show in Section 3. 

\begin{problem}
Suppose that $A,B\su \rr$ are perfect. Is it true that $N+B=A+B$ for
a suitable (not necessarily closed) null set $N\su A$?
\end{problem}

\section{Proof of Theorems \ref{t3} and \ref{t4}}
\begin{lemma}\label{l2}
If $A$ is measurable and $0<|A|<\infty$, then for every $\ep >0$ there
is a compact set $P\su A$ such that $|A\se P|<\ep$ and $\ol d (P,x)\ge 1/2$ for
every $x\in P$.
\end{lemma}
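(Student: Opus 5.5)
We first take a compact set $K\su A$ with $|A\se K|<\ep/2$, using inner regularity of Lebesgue measure. Then we remove from $K$ the points where its density is small. Let
$$K_0=\{x\in K\colon d(K,x)=1\}.$$
By the Lebesgue density theorem, $|K\se K_0|=0$. However, $K_0$ need not be closed, so we approximate it from inside by a compact set $P\su K_0$ with $|K_0\se P|<\ep/2$. Then $|A\se P|<\ep$. The difficulty is that shrinking $K_0$ to $P$ may destroy density: a point $x\in P$ has $d(K,x)=1$, but $P$ could be thin near $x$.

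To control this we use Egorov's theorem on the density ratios. For $x\in K_0$ put
$$f_n(x)=\frac{|K\cap [x-1/n,x+1/n]|}{2/n}.$$
Each $f_n$ is continuous in $x$, and $f_n\to 1$ pointwise on $K_0$. By Egorov's theorem there is a compact $P\su K_0$ with $|K_0\se P|<\ep/4$ on which $f_n\to 1$ uniformly. This uniformity alone still does not give density of $P$ itself, since $K\se P$ could sit near points of $P$.

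So we also arrange that $K\se P$ has density $0$ at points of $P$ in a uniform way. Let $U=K\se P$, a set of measure less than $\ep/4$. For $x\in P$ we have $d(P,x)=d(K,x)-d(U,x)$ whenever these densities exist, so we need $\ol d(U,x)\le 1/2$ for $x\in P$. The set $E=\{x\colon \ol d(U,x)>0\}$ differs from $\cl U$ only by a null set, but $P$ may meet $E$. The cleaner route is therefore to replace the whole construction by a direct choice.

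Take the compact set $K$ and let $P$ be the set of points $x\in K$ such that
$$|K\cap I|\ge |I|/2 \quad\text{for every interval } I\ni x \text{ with } |I|<\de.$$
This $P$ is closed, since the condition is closed in $x$. By the density theorem, $|K\se P|\to 0$ as $\de\to 0$, so we can choose $\de$ with $|K\se P|<\ep/2$. The remaining task is to show $\ol d(P,x)\ge 1/2$ for $x\in P$, and this is the main obstacle: we know $K$ is half-dense near $x$, not $P$. Almost every point of $K$ is a density point of $P$, but that is an almost-everywhere statement, not one for every point.

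Our first attempt at this obstacle is an iteration. Define $P_0=K$ and let $P_{k+1}$ be the set of points $x\in P_k$ where $P_k$ has relative measure at least $1/2$ in every interval around $x$ of length less than $\de_k$. Choose $\de_k$ so that $|P_k\se P_{k+1}|<\ep 2^{-k-2}$, and put $P=\bigcap_k P_k$. Each $P_k$ is compact, so $P$ is compact and $|A\se P|<\ep$. For $x\in P$ and each $k$, we have $|P_k\cap I|\ge |I|/2$ for every interval $I\ni x$ with $|I|<\de_k$. Since
$$|P_k\cap I|-|P\cap I|\le \sum_{j\ge k}|P_j\se P_{j+1}|,$$
and this sum is small, we take a fixed interval length $h<\de_k$. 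Then $|P\cap I|\ge h/2-\ep 2^{-k-1}$. To conclude $\ol d(P,x)\ge 1/2$ we need $\ep 2^{-k}=o(h)$ for some admissible $h\to 0$. So we choose the tail bounds $\ep_k$ adaptively, requiring $\sum_{j\ge k}\ep_j\le \de_k\cd 2^{-k}$ after $\de_k$ is fixed. This is possible because $\de_k$ is chosen before $\ep_{k+1},\ep_{k+2},\ldots$ are. Taking $h=\de_k/2$ then gives a lower bound of $\de_k/4-\de_k 2^{-k}$ on an interval of length $\de_k/2$, which yields $\ol d(P,x)\ge 1/2$.
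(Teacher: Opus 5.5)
Your preliminary steps are fine. Each $P_{k+1}$ is closed: translate a test interval at $x$ to one at $x_m$ and use continuity of $h\mapsto |P_k\cap (I+h)|$. The density theorem also lets you make $|P_k\se P_{k+1}|$ as small as you like by shrinking $\de_k$.

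The gap is in the final estimate. To get $|P\cap I|\ge |I|/2-2^{-k}\de_k$ you need $|P_k\se P|=\sum_{j\ge k}|P_j\se P_{j+1}|\le 2^{-k}\de_k$. The term $j=k$ of this sum is $|P_k\se P_{k+1}|$, and it is determined by $\de_k$ itself. Your remark that $\de_k$ is fixed before $\ep_{k+1},\ep_{k+2},\ldots$ only handles the terms $j>k$. So you would need $|P_k\se P_{k+1}|\le 2^{-k}\de_k$: the points of $P_k$ that violate half-density at some scale below $\de$ would have to have measure $o(\de)$. This is false in general, and it can already happen for $P_0=K$.

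For example, let $P_k$ be $[0,4]$ with, for each $n\ge 1$, $2^n$ open gaps of length $4^{-n}$ removed, the $4^{-n}$-neighbourhoods of the gaps being pairwise disjoint. Take a gap $(a,b)$ of length $g<\de_k/2$ and a point $y$ with $a-g<y\le a$. Then $I=[y,b]$ satisfies $|I|<2g<\de_k$ and $|P_k\cap I|=a-y<|I|/2$, so $y\notin P_{k+1}$. Summing over all such gaps, on both sides, gives $|P_k\se P_{k+1}|\ge\sqrt{2\de_k}$ whenever $\de_k\le 2$. This is much larger than $2^{-k}\de_k$ for small $\de_k$, and $\de_k$ must be small to make $|P_k\se P_{k+1}|$ small.

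Shifting the index does not help. At $x\in P$ you know half-density of $P_{k+1}$ only at scales below $\de_{k+1}$, and then $|P_{k+1}\se P_{k+2}|$ reappears, tied to $\de_{k+1}$ in the same way.

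The underlying defect is that you bound the local loss $|(P_k\se P)\cap I|$ near $x$ by the global loss $|P_k\se P|$. What is needed is control of the loss inside the intervals that witness the density. The paper gets this by using, at stage $k$, only finitely many disjoint closed intervals $I_1^k\stb I_{N_k}^k$, obtained from the Vitali covering theorem. These satisfy:
\begin{enumerate}[$\bullet$]
\item each $I_n^k$ lies inside a stage-$(k-1)$ interval;
\item $|I_n^k|<1/k$ and $|I_n^k\cap K|>\tfrac{k}{k+1}|I_n^k|$;
\item together they cover $K$ up to measure $\ep_k$;
\item $|I_n^j\cap P_k\cap K|>\tfrac{j}{j+1}|I_n^j|$ is preserved for all earlier intervals.
\end{enumerate}
The last condition can be maintained because there are only finitely many such strict inequalities, and the refinement can capture all but an arbitrarily small part of $K\cap I_n^j$. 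In the limit, $|I_{n_k}^k\cap P|\ge\tfrac{k}{k+1}|I_{n_k}^k|$ for the stage-$k$ interval containing $x\in P$. Enlarging that interval to the symmetric interval about $x$ at most doubles its length, which gives $\ol d(P,x)\ge 1/2$.
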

\proof 
Let $\ep >0$ be given, and let $\ep _0 ,\ep _1 ,\ldots$ be positive numbers
such that $\sum_{k=0}^\infty \ep _k <\ep$. Let $K$ be a compact subset of $A$
such that $|A\se K|<\ep _0$.

Let $\ii$ denote  the set of closed intervals $I$ such that $|I|<1$ and
$|I\cap K|>|I|/2$. Then $\ii$ is a Vitali cover of the set of density
points of $K$. By the Vitali covering theorem, there is a sequence
$I_1^1 ,I_2^1 ,\ldots$ of pairwise disjoint elements of $\ii$ such that they
cover a.e. density point of $K$, hence a.e. point of $K$. Then there
is an integer $N_1$ such that $|K\se P_1 |< \ep _1$, where $P_1 =
\bigcup_{n=1}^{N_1} I_n^1$.

Repeating the argument above in each of the intervals $I_n^1$ we obtain the
pairwise disjoint closed intervals $I_1^2 \stb I_{N_2}^2$ and the set 
$P_2 = \bigcup_{n=1}^{N_2} I_n^2$ with the following properties:
\begin{enumerate}[$\bullet$]
\item For every $1\le n\le N_2$ there is an $m$ such that $I_n^2 \su I_m^1$;
\item $|I_n^1 \cap P_2 \cap K|>|I_n^1 |/2$ for every $n=1\stb N_1$;
\item $|I_n^2 |<1/2$ and $|I_n^2 \cap K|> \tfrac{2}{3} \cd |I_n^2|$ for every
$1\le n\le N_2$;
\item $|K \se P_2 | <\ep _2$. 
\end{enumerate}
Repeating this construction we obtain, for every $k\ge 2$, the
pairwise disjoint closed intervals $I_1^k \stb I_{N_k}^k$ and the set
$P_k = \bigcup_{n=1}^{N_k} I_n^k$ with the following properties:
\begin{enumerate}[$\bullet$]
\item For every $1\le n\le N_k$ there is an $m$ such that $I_n^k \su I_m^{k-1}$;
\item $|I_n^j \cap P_k \cap K|>\tfrac{j}{j+1}\cd |I_n^j |$ for every
$j=1\stb k-1$
and $n=1\stb N_j$;
\item $|I_n^k |<1/k$ and $|I_n^k \cap K|> \tfrac{k}{k+1} \cd|I_n^k|$ for every
$1\le n\le N_k$;
\item $|K \se P_k | <\ep _k$. 
\end{enumerate}
Let $P=\bigcap_{k=1}^\infty P_k$. Then $P$ is closed, $P\su K\su A$, $|A\se P|
<\ep$, and $\ol d (P,x)\ge 1/2$ for every $x\in P$. Indeed, if $x\in P$, then
there are integers $n_k$ such that $x\in I_{n_k}^k$ $(k=1,2,\ldots )$. For every
$k$ we have
$$|I_{n_k}^k \cap P|=\lim_{m\to \infty} \left| I_{n_k}^k \cap P_m \right| \ge
\tfrac{k}{k+1} \cd |I_{n_k}^k |.$$
Putting $J_k =[x-h_k ,x+h_k ]$, where $h_k =\min \{ h\ge 0\colon I_{n_k}^k  \su
[x-h,x+h] \}$, we obtain $|J_k |\le 2|I_{n_k}^k |$ and $\liminf_{k\to \infty} |J_k
\cap P|/|J_k|\ge 1/2$, proving $\ol d (P,x)\ge 1/2$. \hfill $\square$

The following lemma is a variant of the Luzin-Menchoff theorem
\cite[Theorem 6.4, p. 20]{Br}).
\begin{lemma}\label{l3}
Suppose that $A$ is measurable, $F\su A$ is compact, and $\ol d (A,x)>0$ for
every $x\in F$. Then there is a compact set $P$ such that $F\su P\su A$ and
$\ol d (P,x)>0$ for every $x\in P$.
\end{lemma}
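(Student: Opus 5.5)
Since $F$ is compact, $A$ is measurable and we may intersect $A$ with a bounded open interval containing $F$ without changing the densities at points of $F$. The plan is to build $P$ as $F$ together with countably many compact pieces of $A$. Each piece sits in a ring around $F$, carries a definite share of the measure of suitable intervals centred at points of $F$, and has positive upper density at each of its own points by Lemma~\ref{l2}. The pieces accumulate only on $F$, so $P$ stays compact.

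\emph{Decomposition.} Let $G_k=\{x\colon 2^{-k-1}<{\rm dist}(x,F)\le 2^{-k}\}$ for $k\ge 1$. For $x\in F$ there are arbitrarily small $h$ with $|A\cap[x-h,x+h]|\ge c_x h$ for some $c_x>0$. Because $F$ is closed, $F$ itself might have positive measure. So first put $F\cup K_0$ into $P$, where $K_0\su A$ is a compact set with $|A\cap U\se K_0|$ small for a neighbourhood $U$ of $F$. Then apply Lemma~\ref{l2} to each $A\cap G_k$ (a measurable set of finite measure) with $\ep_k=\eta_k|A\cap G_k|$, where $\eta_k\to 0$. This gives compact sets $P_k\su A\cap\overline{G_k}$, which we may take inside $G_k$ itself after shrinking, with $\ol d(P_k,y)\ge 1/2$ for all $y\in P_k$ and $|A\cap G_k\se P_k|<\eta_k|A\cap G_k|$. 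The case $|A\cap G_k|=0$ is trivial: take $P_k=\emp$. Since $F$ is compact and $P_k$ is a closed subset of $G_k$, the set $P=F\cup\bigcup_k P_k$ is compact: any limit point of points from infinitely many $P_k$ has distance $0$ to $F$. Also $P\su A$.

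\emph{Density check.} For $y\in P_k$ we get $\ol d(P,y)\ge\ol d(P_k,y)\ge 1/2$. For $x\in F$ with $|F\cap[x-h,x+h]|$ not negligible, density comes from $F$ directly. Otherwise $A\cap[x-h,x+h]$ is mostly contained in $\bigcup_{k\ge k(h)}G_k$, where $k(h)\approx\log_2(1/h)$ (plus the part in $F$). Then
$$|P\cap[x-h,x+h]|\ge|A\cap[x-h,x+h]|-\sum_{k\ge k(h)}\eta_k|A\cap G_k|.$$

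\emph{Main obstacle.} The subtracted error is controlled by $|A\cap G_k|$ globally, not locally near $x$. So it could swamp $c_xh$. To fix this, I would apply Lemma~\ref{l2} locally: cover $G_k$ by the $O(2^{k})$ dyadic intervals of length $2^{-k-2}$, and run Lemma~\ref{l2} on $A\cap G_k\cap J$ for each such interval $J$ with relative error $\eta_k$. Any interval $[x-h,x+h]$ meets $G_k$, for $k\ge k(h)$, only in pieces that are unions of such $J$, up to two boundary intervals of size $\lesssim 2^{-k}\le h$ each. The relative error then becomes local: the loss is at most $\eta_{k(h)}|A\cap[x-2h,x+2h]|$. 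The boundary terms are summed over $k\ge k(h)$ with geometric weights, giving $O(\eta_{k(h)}h)$. Choosing $\eta_k\to0$ then gives $\ol d(P,x)\ge\frac12\ol d(A,x)\cdot$(a fixed constant)$>0$, after comparing the intervals $[x-2h,x+2h]$ and $[x-h,x+h]$. This local bookkeeping is the delicate part.
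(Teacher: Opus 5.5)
Your proposal is correct, and in substance it is the paper's Luzin--Menchoff-type construction: decompose the complement of $F$ near $F$ into Whitney-type cells, apply Lemma~\ref{l2} to $A$ on each cell, and let $P$ be $F$ together with the resulting compact pieces, which is compact because the pieces accumulate only on $F$. The difference is in the bookkeeping.

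The paper cuts each component $(a_n,b_n)$ of $[a,b]\se F$ into intervals $[x^n_k,x^n_{k+1}]$ that shrink towards $a_n$ and $b_n$, and keeps only half of the measure of $A$ in each. So at $x\in F$, a piece straddling $x\pm h$ may have its retained half outside $[x-h,x+h]$. One must then pass to a larger interval containing that piece, which uses that the pieces are small compared with their distance from $F$, uniformly over the gaps; the paper leaves this as ``easy to check''. The paper also has to arrange positive one-sided densities at $\min F$ and $\max F$ beforehand.

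Your cells $G_k\cap J$ surround $F$ on both sides, so no endpoint reduction is needed. They satisfy $\mathrm{diam}(G_k\cap J)\le 2^{-k-2}\le\tfrac12\,\mathrm{dist}(G_k\cap J,F)$ by construction. Because you keep all but an $\eta_k$-fraction of $A$ in every cell, the loss in any part of a cell is at most the loss in the whole cell, so straddling cells are harmless. Concretely, every cell meeting $[x-h,x+h]$ lies in $[x-2h,x+2h]$. Hence $|(A\cap[x-h,x+h])\se P|\le 4h\sup_{2^{-k-1}<h}\eta_k=o(h)$, which even gives $\ol d(P,x)=\ol d(A,x)$ for $x\in F$, stronger than what you state.

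Two clean-ups are needed. First, drop $K_0$: it is unnecessary since $F\su P$ is kept whole, and an arbitrary compact $K_0\su A$ can contain points where $A$, hence $P$, has zero upper density. Second, no ``shrinking'' is needed, since Lemma~\ref{l2} applied to $A\cap G_k\cap J$ already yields a compact subset of $G_k\cap J$; shrinking could destroy the density property of the pieces.
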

\proof Let $a= \min F$ and $b=\max F$. We may assume that the right upper
density of $A$ at the point $a$ is positive. Indeed, otherwise the left upper
density of $A$ at $a$ must be positive, and then $|A\cap (-\infty , a)|>0$.
In this case there is a density point $a_1$ of $A$ such that $a_1 <a$, and
then we can replace $F$ by $F\cup \{ a_ 1\}$. Similarly, we may assume that
the left upper density of $A$ at the point $b$ is positive.

Let $(a_n ,b_n )$ $(n=1,2,\ldots )$ be the components of $[a,b] \se F$. For
every $n$, let $(x_k^n )_{k=-\infty} ^\infty$ be a sequence such that $a_n <x_k^n
< x_{k+1}^n <b_n$ for every integer $k$, $\lim_{k\to -\infty} x_k^n =a_n$ and
$\lim_{k\to \infty} x_k^n =b_n$ and
$$\lim_{k\to -\infty} (x^n_{k+1} -x^n_k )/(x_k^n -a_n )=\lim_{k\to \infty} (x^n_{k+1} -x^n_k )/(b_n -x_{k+1}^n )=0.$$
By Lemma \ref{l2}, for every $n$ and $k$ there is a compact set
$P_k^n \su [x_k^n ,x_{k+1}^n ] \cap A$ such that $|P_k^n | \ge |[x_k^n , x_{k+1}^n ]
\cap A|/2$, and $\ol d (P_k^n ,x)\ge 1/2$ for every $x\in P_k^n$. (If
$[x_k^n , x_{k+1}^n ] \cap A$ is null, then we choose $P_k^n =\emp$.)

We put $P=F\cup \bigcup_{n=1}^\infty \bigcup_{k=-\infty}^\infty P_k^n$.
Then $P$ is compact, $F\su P\su A$, and it is easy to check that
$\ol d (P,x)>0$ for every $x\in P$. \hfill $\square$

\subsection*{Proof of Theorem \ref{t4}}
Let $A,B\su \rr$ be compact, and suppose that the upper density of $B$ is
positive at every point of $B$. 

Let $\nn$ denote the family of those closed subsets of $A$ for which
$N+B=A+B$. Then $\nn$ is nonempty, as $A\in \nn$. If $N_1 \sp N_2 \sp
\ldots$ is a nested sequence of elements of $\nn$ and $N=\bigcap_{i=1}^\infty
N_i$, then $N\in \nn$. Indeed, let $x\in A+B$ be arbitrary. For every $i$
we have $N_i \in \nn$, and thus $x=y_i +z_i$, where $y_i \in N_i$ and
$z_i \in B$. There is a sequence of indices $i_k$ such that the subsequences
$y_{i_k}$ and $z_{i_k}$ are convergent. If $y_{i_k} \to y$ and $z_{i_k} \to z$, then
$y\in N$, $z\in B$ and $y+z=x$, proving $x\in N+B$. Since this is true for every
$x\in A+B$, we have $N+B\sp A+B$ and $N\in \nn$.

It is well known that every transfinite and strictly decreasing sequence of 
closed subsets of $\rr$ is countable. (See \cite[Theorem 51, p. 66]{S}.)
Therefore, the intersection of every transfinite and strictly decreasing
sequence of the elements of $\nn$ belongs to $\nn$. This implies that $\nn$
contains a minimal element (with respect to inclusion). Let $N \in \nn$ be
a minimal element of $\nn$; we prove that $N$ is null.

Suppose $|N|>0$, and let $a\in N$ be a density point of $N$. Since $N$
is minimal, $N_n =N\se (a-1/n,a+1/n)\notin \nn$ for every $n=1,2,\ldots$.
Thus $N_n +B\ne A+B=N+B$, and we can choose an element $c_n \in (N+B)\se
(N_n +B)$. Then $c_n =a_n +b_n$ for suitable $a_n \in N$ and $b_n \in B$,
and whenever $c_n =x+y$ with $x\in N$ and $y\in B$, then $x\in (a-1/n,a+1/n)$.
In particular, we have $a_n \in (a-1/n,a+1/n)$.

Turning to a suitable subsequence, we may assume that the sequence $b_n$
is convergent. Suppose $b_n \to b$, then $b\in B$. We have $\ol d (B,b)>0$ by
assumption, and thus the set $b-B$ has positive upper density at the origin.
Since the origin is a density point of $N-a$, it follows that
$(N-a)\cap (b-B)$ is of positive measure. Let $|(N-a)\cap (b-B)|=m$.

It is well known, and is easy to prove, that if $E,F\su \rr$ are measurable,
then the function $h\mapsto |E\cap (F+h)|$ is continuous. Thus the function
$h\mapsto m(h)= |(N-a)\cap (b+h-B)|$ is continuous. Since $m(0)=m>0$,
there is a $\de >0$ such that $|(N-a)\cap (b+h-B)|>m/2$ for every $|h|<\de$.

Put $Q_h =(N-a)\cap (b+h-B)$. If $u\in Q_h$, then $u+a\in N$, $b+h-u\in B$,
and $a+b+h=(u+a)+(b+h-u)$. If $|h|<\de$, then it follows from
$|Q_h |>m/2$ that there are elements $u_1 ,u_2 \in Q_h$ such that
$|u_1 -u_2 |>m/4$, since otherwise $Q_h$ could be covered by a segment of
length $m/4$, which is impossible.

Since $c_n \to a+b$, there is an $n$ such that $|c_n -a-b|<\de$ and $n>8/m$. 
Fix such an index $n$, and put $h=c_n -a-b$. Then there are elements
$u_1 ,u_2 \in Q_h$ such that $|u_1 -u_2 |>m/4$, and
$$c_n =a+b+h=(u_1 +a)+(b+h-u_1 )=(u_2 +a)+(b+h-u_2) .$$
Here $u_i +a \in N$ and $b+h-u_i \in B$ $(i=1,2)$, and thus  $u_1 +a, u_2 +a
\in (a-1/n,a+1/n)$. This, however, contradicts $2/n <m/4$. This contradiction
proves that $|N|=0$. \hfill $\square$

\subsection*{Proof of Theorem \ref{t3}}
Suppose that $A$ is measurable, $B$ is $F_\si$, and the upper density of $B$
is positive at every point of $B$.

We have $A=N_0 \cup A'$, where $N_0$ is null and $A'$ is $F_\si$. Let
$A'=\bigcup_{k=1}^\infty F_k$ and $B=\bigcup_{k=1}^\infty P_k$, where $F_k$ and
$P_k$ are compact. We may assume that $F_1 \su F_2 \su \ldots$ and
$P_1 \su P_2 \su \ldots$. It is enough to prove that for every $k=1,2, \ldots$
there is a null set $N_k \su A$ such that $N_k +B\sp F_k +P_k$. Indeed, if
this is true, then $N=\bigcup_{k=0}^\infty N_k$ is null, and
\begin{align*}
N+B= &(N_0 +B) \cup \bigcup_{k=0}^\infty (N_k +B)\sp (N_0 +B) \cup
\bigcup_{k=0}^\infty (F_k +P_k ) =\\
&(N_0 +B) \cup (A' +B)=A+B.
\end{align*}
Fix $k$. We have $\ol d (B,x)>0$ for every $x\in P_k$. By Lemma
\ref{l3}, there is a compact set $Q_k$ such that $P_k \su Q_k\su B$ and
$\ol d (Q_k ,x)>0$ for every $x\in Q_k$. By Theorem \ref{t4}, there is a closed
null set $N_k \su F_k$ such that $N_k +Q_k = F_k +Q_k $. Then $N_k +B \sp
N_k +Q_k = F_k +Q_k \sp F_k +P_k$. \hfill $\square$

\section{An example}
\begin{theorem}\label{ex1}
Let $A$ be a nowhere dense compact perfect set of positive measure. Then there
exists a compact perfect set $B$ such that $N+B\ne A+B$ for every closed null
subset $N$ of $A$.
\end{theorem}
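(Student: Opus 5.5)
I will construct $B$ so that, for a countable dense set of "targets", each target has essentially a unique representation; more precisely, I choose $B$ so that $A+B$ contains points whose only representations $a+b$ with $a\in A$ force $a$ to range over a set that a closed null $N\subset A$ cannot hit. The natural idea is this: since $A$ is nowhere dense, the complement of $A$ is a dense open set. I aim to build $B$ as a perfect set of the form $B=\{b_0\}\cup\bigcup_k B_k$, a union of small perfect pieces clustering at a point. Each piece $B_k$ should be arranged so that $A+B_k$ near certain points is produced only by $a$ in a prescribed small portion $A\cap I_k$ of $A$.

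The key property I want is the following. For every closed null $N\subset A$, the set $A\setminus N$ is relatively open in $A$ and nonempty; indeed, it has positive measure, so it contains a portion $A\cap I$ with $I$ an open interval meeting $A$. Enumerate the rational intervals $I_1,I_2,\ldots$ meeting $A$. For each $k$ I want a point $c_k\in A+B$ with the property that
\begin{equation*}
c_k=a+b,\quad a\in A,\ b\in B \ \Longrightarrow\ a\in I_k.
\end{equation*}
Moreover, I want $c_k$ to have \emph{positive-measure many} representations inside $I_k$, or to be such that every representation uses $a$ in a prescribed subset of $A\cap I_k$ that $N$ must miss. Simplest is to aim for the following: every such $a$ lies in a set $S_k\subset A\cap I_k$, and the sets $S_k$ are chosen so that every closed null $N\subset A$ misses some $S_k$ entirely. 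Then $c_k\notin N+B$, giving $N+B\ne A+B$.

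How to get such $S_k$: a closed null $N\subset A$ is nowhere dense in $A$, because $A$ is perfect with positive measure in every portion. This holds if I first replace $A$ by its measure-support part; I need that every portion of $A$ has positive measure, and if not, I handle it by restricting to $A$'s density-support, which is still perfect. So it suffices to have $S_k=A\cap I_k$ on a basis of portions. Then for each $k$, I build a perfect set $B_k\subset\rr$, small in diameter and located near a chosen $b_0$, with $c_k-B_k \supset A\cap \overline{I_k}$-relevant points but $(c_k-B_k)\cap A\subset I_k$. Concretely, I take $B_k=c_k-(A\cap J_k)$ for a closed interval $J_k\subset I_k$ whose endpoints lie in gaps of $A$, so that $A\cap J_k$ is a perfect portion. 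Then I choose the $c_k$ so that $c_k-B_j$ for $j\ne k$, and $c_k-b_0$, avoid $A$. This is possible by a Baire-category or measure argument: the bad set of $c_k$ is a meager closed set, using that $A-A$ translates of nowhere dense compacta are nowhere dense in countable unions. I then require $c_k\to$ a limit so that the pieces $B_k$ converge to $b_0$, which makes $B$ compact and perfect.

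The main obstacle is ensuring that $c_k$ does not get extra representations through other pieces $B_j$. The condition that $c_k\notin A+B_j$ for $j\ne k$ must be arranged inductively, together with $c_j\notin A+B_k$ for $j<k$, and with $c_k\notin A+b_0$. At stage $k$, the forbidden sets for $c_k$ are $A+B_j$ and $A+b_0$, that is, $A-A$ translates. These are compact, and I need them to avoid an open region near the intended location. This fails if $A-A$ has interior, which is typical for positive-measure $A$! So I will instead make $B_k$ much smaller than $A\cap J_k$: I will take $B_k=c_k-P_k$ with $P_k\subset A\cap J_k$ a tiny perfect null set chosen generically, so that $A+B_j$ is a countable union of translates of $A$ by elements of a small perfect set. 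Then I will pick all the $P_k$ and $c_k$ simultaneously via a Kuratowski–Mycielski-type genericity argument in the hyperspace, making the required non-membership conditions, which are comeager conditions, hold. For this to work, $N$ must miss some $P_k$ entirely, not merely $A\cap I_k$. So I let the $P_k$ run through a family such that every closed null $N$ misses one. I expect to get this by taking, for each basic portion, many candidate $P_k$, and I expect this choice to be the delicate point to verify.
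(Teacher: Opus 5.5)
Your reduction is correct and is a genuinely different route from the paper's. Since $|A|>0$, a closed null $N\su A$ is a proper closed subset of $A$, so it misses some portion $A\cap I_k$ with $I_k$ a rational interval. It therefore suffices to find, for each $k$, a point $c_k\in A+B$ all of whose representations $c_k=a+b$ ($a\in A$, $b\in B$) have $a\in I_k$. The detour through the ``measure-support'' of $A$ is unnecessary, and replacing $A$ by a subset would change the problem. The paper instead fixes a countable dense $\{x_k\}\su A$ and builds $B=\cl\{y_k\}$ so that each $x_k+y_k$ has a \emph{unique} representation.

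However, your proposal stops exactly where the work lies. You assert that the non-membership requirements are ``comeager conditions'' without argument, and the heuristic you give is false. $A+B_j=\bigcup_{b\in B_j}(A+b)$ is an \emph{uncountable} union of translates of $A$, and making $P_j$ small or null does not make it nowhere dense. For instance, if $A\sp K$, the middle-thirds Cantor set (e.g.\ $A=K\cup F$ with $F$ a fat Cantor set), then $P=3^{-n}K\su A$ is null with diameter $3^{-n}$, yet $A-P\sp 3^{-n}(K-K)=[-3^{-n},3^{-n}]$. With such a $P_j$, the set $A+B_j=c_j+(A-P_j)$ has interior, and nothing in your plan stops it from swallowing the region near $b_0+A$ where later $c_k$ must be placed. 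Conversely, the point you single out as delicate is a non-issue: $P_k\su A\cap I_k$, so any $N$ missing $A\cap I_k$ automatically misses $P_k$.

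The missing ingredient is to choose $P_k$ so that $A-P_k$ is nowhere dense; this is exactly where the hypothesis that $A$ is nowhere dense enters. Such $P_k$ exist generically in the hyperspace of compact subsets of the perfect portion $A\cap J_k$. For a rational interval $U$, the set of $P$ with $U\not\su A-P$ (i.e.\ $P\cap(A-u)=\emp$ for some $u\in U$) is open. It is dense, because $A-F$ is nowhere dense for every finite $F$. Intersecting over all $U$, and with the comeager family of perfect sets, gives a perfect $P_k\su A\cap J_k$ with $A-P_k$ nowhere dense; taking $|J_k|$ small makes $P_k$ small.

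No Kuratowski--Mycielski argument is needed, and the convergence $B_k\to b_0$ is not a comeager condition anyway. A plain induction suffices: choose $c_k$ in a small neighbourhood of $b_0+P_k$, outside the finitely many closed nowhere dense sets $b_0+A$, $c_j+(A-P_j)$ and $c_j-(A-P_k)$ for $j<k$. These choices give $c_k\notin A+b_0$, $c_k\notin A+B_j$ and $c_j\notin A+B_k$ for all $j<k$. Hence every representation of $c_k$ uses $b\in B_k$, i.e.\ $a\in P_k\su I_k$, and $B=\{b_0\}\cup\bigcup_k B_k$ is compact and perfect.

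With this filled in, your argument is complete. Like the paper's, it shows that any $N\su A$ with $N+B=A+B$ is dense in $A$. The paper needs no hyperspace lemma, because it controls all of $B$ directly through the nested families of intervals $I_n^k\su(x_k+y_k-A)^c$.
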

\proof
We denote by $\cl E$ and $\inter E$ the closure and the interior of the set
$E$. For every positive integer $k$ let $i_k$ denote the largest integer $i$
such that $2^i \mid k$. Then $i_k <k$ for every $k$, and for every nonnegative
integer $m$ there are infinitely many $k$ with $i_k =m$.

Let $X$ be a countable dense subset of $A$, and let $x_0 ,x_1 ,\ldots$ be an
enumeration of $X$. We construct a sequence $(y_k )$ such that $B=\cl (\{ y_k \} )$ is perfect and compact, and whenever $x_k +y_k =x+y$, where $x\in A$ and $y
\in B$, then $x=x_k$ and $y=y_k$. It will follow that if $N\su A$ and
$N+B=A+B$, then $x_k \in N$ for every $k$; that is, $X\su N$. Thus $N$ must
be dense in $A$ and, consequently, if $N$ is closed that it must be equal to
$A$, and cannot be a null set. 

We turn to the construction of the numbers $y_k$.
Put $y_0 =0$. Since $(x_0 +y_0 -A)^c$ is an everywhere dense
open set, we can choose pairwise disjoint closed intervals $I_n^0 \su
(x_0 +y_0 -A)^c$ such that
$$\cl \left( \bigcup_{n=1}^\infty I_n^0 \right) =\{ y_0 \} \cup
\bigcup_{n=1}^\infty I_n^0 .$$
Suppose that $k>0$, and we have defined the points $y_0 \stb y_{k-1}$ and the
closed intervals $I_1^{k-1} , I_2^{k-1} , \ldots$ such that
$$\cl \left( \bigcup_{n=1}^\infty I_n^{k-1} \right) =\{ y_0 \stb y_{k-1} \} \cup
\bigcup_{n=1}^\infty I_n^{k-1} .$$
The set $G_k =\bigcap_{i=0}^{k-1} (y_i -x_k +A)^c$ is everywhere dense and open.
Thus we can choose a point $y_k \in G_k \cap \bigcup_{n=1}^\infty \inter I_n^{k-1}$
such that $0<|y_k -y_{i_k}|<1/2^k$.

Since $(x_k +y_k -A)^c$ is an everywhere dense open set, we can choose pairwise
disjoint closed intervals
$$I_n^k \su (x_k +y_k -A)^c \cap \bigcup_{n=1}^\infty \inter I_n^{k-1}$$
such that
\begin{equation}\label{e4}
\cl \left (\bigcup_{n=1}^\infty I_n^k \right) =\left (\bigcup_{n=1}^\infty I_n^k
\right) \cup \{ y_0 \stb y_k \} \su (x_k +y_k -A)^c \cup \{ y_0 \stb y_k \} .
\end{equation}
In this way we defined the points $y_k$ and the intervals $I_n^k$ for every
$k\ge 0$ and $n\ge 1$. Put $Y=\{ y_0 ,y_1 ,\ldots \}$ and $B=\cl (Y )$. Then
$B$ is a compact set, as $Y$ is bounded. We show that $B$ is perfect. Indeed,
if $y\in B\se Y$, then $y$ is not an isolated point of $B$, since $y$ is the
limit of a subsequence of $(y_k)$. On the other hand, if $y=y_m$, then
$i_k =m$ for infinitely many $k$. Thus $0<|y_k -y_m|<1/2^k$ for infinitely
many $k$ proving that $y_m$ is not an isolated point of $B$ either.

It follows from the construction that $B\su \cl (\bigcup_{n=1}^\infty I_n^k )$
for every $k$. Therefore, by \eqref{e4} we obtain
\begin{equation}\label{e5}
B\su (x_k +y_k -A)^c \cup \{ y_0 \stb y_k \} \qquad (k=0,1,\ldots ).
\end{equation}
Suppose that $x_k +y_k =x+y$, where $x \in A$ and $y\in B$. Then $y =
x_k +y_k -x \in x_k +y_k -A$, and thus \eqref{e5} gives $y \in  
\{ y_0 \stb y_k \}$. If $y \ne y_k$, then $y_k =x+y -x_k \in
\bigcup_{i=0}^{k-1} (y_i -x_k +A) =(G_k )^c$, which contradicts $y_k \in G_k$.
Thus $y =y_k$ and, consequently, $x =x_k$. As we saw above, this implies
that whenever $N+B=A+B$, where $N\su A$, then $N$ must be dense in $A$, and
thus it cannot be a closed null subset of $A$. \hfill $\square$

\subsection*{Acknowledgments}
The author was supported by the Hungarian National Foundation for Scientific
Research, Grant No. K146922.

\begin{small}\noindent
M. Laczkovich\\
{\sc Professor emeritus of Mathematics at the\\
Department of Analysis, E\"otv\"os Lor\'and University, Budapest and\\
Department of Mathematics, University College London\\
E-mail: {\tt miklos.laczkovich@gmail.com}}
\end{small}

\end{document}